\documentclass[12pt,reqno]{amsart}
\usepackage{amssymb}
\usepackage[usenames,dvipsnames]{color}
\usepackage{euscript}
\usepackage{multicol}
\usepackage{amssymb}
\usepackage{amsmath}
\usepackage{times} \sloppy
\usepackage{colordvi}
\usepackage{graphicx}              
\usepackage{amsmath}               
\usepackage{amsfonts}              
\usepackage{amsthm}                
\usepackage{enumitem}
\usepackage{amssymb}
\usepackage{url}
\usepackage{color}
\usepackage{mathrsfs}
\usepackage{mathtools}
\usepackage{mathabx}
\usepackage{tikz}
\usepackage{bbm}
\usepackage[ansinew]{inputenc}

\newtheorem{thm}{Theorem}
\newtheorem{lem}[thm]{Lemma}
\newtheorem{prop}[thm]{Proposition}
\newtheorem{cor}[thm]{Corollary}

\newtheorem{rem}[thm]{Remark}
\theoremstyle{definition}
\theoremstyle{definition}

\newcommand{\norm}[1]{\left\lVert#1\right\rVert} 
\newcommand{\RR}{\mathbb{R}}            

\newcommand{\CC}{\mathbb{C}}

\newcommand{\Intr}{\displaystyle\int}
\newcommand{\Sumn}{\displaystyle\sum}
\newcommand*\diff{\mathop{}\!\mathrm{d}}
\DeclareMathOperator{\sgn}{sgn}

\begin{document}

\title[Spectral inequalities for a class of integral operators]
{Spectral inequalities for a class of integral operators}

\author{Ari Laptev}
\address{Ari Laptev: Imperial College London \\ 180 Queen's Gate \\ London SW7 2AZ \\ UK }
\email{a.laptev@imperial.ac.uk}

\author{Andrei Velicu}
\address{Andrei Velicu: Imperial College London \\ 180 Queen's Gate \\ London SW7 2AZ \\ UK }
\email{a.velicu15@imperial.ac.uk}

\keywords{Singular integral operators, Spectrum}

\subjclass{Primary: 35P15; Secondary: 81Q10}

\dedicatory{To Nina Nikolaevna with respect and admiration}

\begin{abstract}
We obtain inequalities for the Riesz means for the discrete spectrum of a class of self-adjoint compact integral operators. Such bounds imply some inequalities for the counting function of the Dirichlet boundary problem for the Laplace operator. The paper is an extension of the results previously obtained in \cite{L1}.
\end{abstract}

\date{}

\maketitle

\noindent
Let $\Omega\subset \mathbb R^d$, $d\ge1$, be a domain of finite measure, $|\Omega|<\infty$, and let $K(x)$, $x\in \mathbb R^d$, be a homogeneous function  of order $\alpha-d$, 
such that $0<\alpha<d$,
$$
K(t x) = t^{\alpha-d} K(x), \qquad t>0.
$$
Assuming $K(x) = \overline{K(-x)}$, we consider the self-adjoint integral operator $\mathcal K$ defined in $L^2(\Omega)$ by
\begin{equation}\label{K}
\mathcal K u(x) = \int_\Omega K(x-y) u(y)\, dy. 
\end{equation}
Let us introduce $\widehat K$ the Fourier transform of $K$ in the sense of theory of distributions 
\begin{equation}\label{hatK}
\widehat K(\xi) = \int_{\mathbb R^d}  e^{-i x \xi} K(x)\, dx.
\end{equation}
The general theory of homogeneous distributions (see for example \cite{H, GSh}) says that if $u\in \mathcal{S}'(\RR^d)$ is homogeneous of degree $q$, then $\widehat{u}$ is a homogeneous distribution of degree $-q-d$. In addition, if $u\in C^\infty(\RR^d\setminus \left\{ 0 \right\})$, then also $\widehat{u}\in C^\infty(\RR^d\setminus \left\{ 0 \right\})$.
Therefore $\widehat K$ is a real-valued homogeneous of order $-\alpha$ function.

\medskip
\noindent
The operator $\mathcal K$ is a compact self-adjoint operator in $L^2(\Omega)$  that might have positive and negative eigenvalues $\{s_k^{\pm}\}_{k=1}^\infty$ accumulating at zero.  The Riesz means of the operator $\mathcal{K}$ is defined as
$$ \sum_k( |s_k^\pm| - s)_+.$$
The aim of this paper is to give both upper and lower bounds for the Riesz means of the operator $\mathcal{K}$. 

\medskip
\noindent
The structure of the paper is as follows. In Section 1 we fine and prove an upper bound for the Riesz means of arbitrary compact integral convolution type operator in $L^2(\Omega)$, where $\Omega\subset\mathbb{R}^d$ is a domain of finite measure. In Section 2 we obtain the lower bound which is more involved only for homogeneous kernels and $\Omega$ being strictly convex. Section 3 presents some special cases and applications.

\section{The Upper Bound}

\medskip
\noindent
Let $Q$ be a distribution from $\mathcal S'(\mathbb{R}^d)$ such that its Fourier transform $\widehat Q\in L^1_{\rm loc}(\mathbb{R}^d)$, satisfies
$$
\widehat Q(\xi) = \int_{\mathbb{R}^d} Q(x) e^{-ix\xi} dx \to 0 \quad {\rm as} \quad |\xi|\to\infty.
$$
and its convolution kernel generates a compact operator in $L^2(\Omega)$
$$
\mathcal Q u(x) = \int_\Omega Q(x-y) \,u(y) \, dy.
$$

%---------------------
\begin{thm}\label{Th1}
Let $\Omega\subset\mathbb R^d$, $d\ge1$,  be a domain of finite measure.  
Then the following inequality holds for the Riesz means of the eigenvalues $\{s_k^\pm\}$ of the operator  $\mathcal Q$ 
\begin{equation}\label{ReiszQ}
\sum_k( |\lambda_k^\pm| - \lambda)_+ \le (2\pi)^{-d}\, |\Omega| \,  \int_{\mathbb R^d} (|\widehat Q(\xi)|-\lambda)_+\, d\xi.
\end{equation}
\end{thm}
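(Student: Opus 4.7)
The plan is to combine a variational principle for the Riesz means (Ky Fan's maximum principle) with Plancherel's identity and a pointwise Bessel bound on the associated spectral density. I would treat the positive and negative parts of the spectrum in parallel, applying the argument first to $\mathcal Q$ and then to $-\mathcal Q$ (whose symbol is $-\widehat Q$); since $(t-\lambda)_+ + (-t-\lambda)_+ = (|t|-\lambda)_+$ for $\lambda\ge 0$, the two bounds merge into the inequality of the theorem.

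For the positive part, the Ky Fan / Courant max-min principle gives
$$ \mathrm{Tr}(\mathcal Q - \lambda)_+ \;=\; \sup_{\{v_j\}} \sum_j \bigl((\mathcal Q - \lambda) v_j, v_j\bigr)_{L^2(\Omega)}, $$
where the supremum is taken over finite orthonormal families $\{v_j\}\subset L^2(\Omega)$. Extending each $v_j$ by zero to $\tilde v_j\in L^2(\mathbb R^d)$, noting that $(Q*\tilde v_j)|_\Omega = \mathcal Q v_j$, and applying Plancherel one obtains
$$ (\mathcal Q v_j, v_j)_{L^2(\Omega)} \;=\; (Q*\tilde v_j,\tilde v_j)_{L^2(\mathbb R^d)} \;=\; (2\pi)^{-d}\int_{\mathbb R^d} \widehat Q(\xi)\,|\widehat{\tilde v_j}(\xi)|^2\,d\xi. $$
Summing over $j$ and introducing the density $\rho(\xi):=\sum_j |\widehat{\tilde v_j}(\xi)|^2$, the decisive observation is that
$$ \widehat{\tilde v_j}(\xi) = \bigl(v_j,\, e^{ix\cdot\xi}\chi_\Omega\bigr)_{L^2(\Omega)}, \qquad \|e^{ix\cdot\xi}\chi_\Omega\|^2_{L^2(\Omega)} = |\Omega|, $$
so Bessel's inequality gives the pointwise bound $\rho(\xi)\le |\Omega|$. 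The elementary estimate $(\widehat Q - \lambda)\rho \le (\widehat Q-\lambda)_+\rho \le |\Omega|(\widehat Q-\lambda)_+$ and integration then yield
$$ \sum_j\bigl((\mathcal Q-\lambda)v_j,v_j\bigr) \le (2\pi)^{-d}|\Omega|\int_{\mathbb R^d}(\widehat Q(\xi)-\lambda)_+\,d\xi, $$
and taking the supremum over $\{v_j\}$ produces the desired bound on $\sum_k(s_k^+-\lambda)_+$.

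The main technical point to watch is the Plancherel identity, since $\widehat Q$ is only assumed to be locally integrable. This is where the hypothesis $\widehat Q(\xi)\to 0$ as $|\xi|\to\infty$ is used: together with the fact that $\tilde v_j$ has support of finite measure, so that $|\widehat{\tilde v_j}|^2 \in L^1\cap L^\infty$, it ensures absolute convergence of the spectral integrals and legitimises the distributional pairing above. The final combinatorial step of merging the $+$ and $-$ bounds into a single integral of $(|\widehat Q|-\lambda)_+$ is a pointwise case distinction according to the sign of $\widehat Q(\xi)$.
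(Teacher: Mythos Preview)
Your proof is correct and follows essentially the same route as the paper: Plancherel on the Fourier side together with the Bessel/Parseval bound $\sum_k|\widehat{\psi_k}(\xi)|^2\le\|e_\xi\|_{L^2(\Omega)}^2=|\Omega|$ for orthonormal systems. The only cosmetic differences are that the paper works directly with the eigenfunctions (using Parseval, hence equality) rather than invoking the Ky~Fan variational principle with Bessel, and it handles the positive and negative eigenvalues in one stroke via $|\widehat Q|$ instead of treating $\pm\mathcal Q$ separately and recombining.
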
 
%---------------------

\begin{proof}
Let $\{\psi_k^\pm\}$ be the orthonormal system of eigenfunctions of the operator $\mathcal Q$ corresponding to the eigenvalues $\lambda^\pm_k$. Then by definition we have
\begin{multline*}
\sum_k( |\lambda_k^\pm| - \lambda)_+ =  \sum_k \left( \left|(\mathcal Q \mathcal \psi_k^\pm, \psi_k^\pm)\right|  - \lambda  \|\psi_k^\pm\|^2\right)_+ \\
=
\sum_k \left( \left| \int_{\Omega}\int_{\Omega} Q(x-y)\psi_k^{\pm}(y)  \overline{\psi_k^{\pm}(x)} \, dydx \right| - \lambda \|\psi_k^\pm\|^2\right)_+  .
\end{multline*} 
Extending $\psi_k^\pm$ by zero outside $\Omega$ and using the Plancherel theorem we obtain
\begin{multline*} 
\sum_k( |\lambda_k^\pm| - \lambda)_+ \\ 
= 
 \sum_k  (2\pi)^{-d} \, \left( \left| \int_{\mathbb R^d} \widehat{Q}(\xi) \, |\widehat{\psi_k^\pm}(\xi)|^2 d\xi \right| - \lambda\, \int_{\mathbb R^d}|\widehat{\psi_k^\pm}(\xi)|^2 \, d\xi\right)_+ \\
 \le
  \sum_k  (2\pi)^{-d} \, \left(  \int_{\mathbb R^d} \left( |\widehat{Q}(\xi)| - \lambda\right) \, |\widehat{\psi_k^\pm}(\xi)|^2 d\xi  \right)_+\\
  \le 
  \sum_k  (2\pi)^{-d} \, \int_{\mathbb R^d} \left( |\widehat{Q}(\xi)| - \lambda\right)_+ \, |\widehat{\psi_k^\pm}(\xi)|^2 d\xi .
\end{multline*} 
Let $e_\xi(x)=e^{i\xi\cdot x}$. We now use that $\{\psi_k^\pm\}$ is the orthonormal system of functions in $L^2(\Omega)$ and derive using Parseval's identity
$$
  \sum_k |\widehat{\psi_k^\pm}(\xi)|^2 =  \sum_k \left| \int_\Omega e^{-ix\xi} \psi_k^\pm\, dx \right|^2 = \|e_\xi\|^2 = |\Omega|.
$$
This finally implies 
$$
\sum_k( |\lambda_k^\pm| - \lambda)_+ \le (2\pi)^{-d} \, |\Omega| \, \int_{\mathbb R^d} (|\widehat Q(\xi)|-\lambda)_+\, d\xi.
$$
The proof is complete.
\end{proof}

\noindent
Let now $\mathcal Q = \mathcal K$ defined in \eqref{K}.
The next statement follows immediately from Theorem \ref{Th1} by changing variables in the integral in \eqref{ReiszQ} by substituting the homogeneous function $\widehat K(\xi)$ given by  \eqref{hatK}.
%---------------------
\begin{cor}\label{Cor1}
Let $\Omega\subset\mathbb R^d$, $d\ge1$,  be a domain of finite measure and let $0<\alpha<d$. 
Then the following inequality holds for the Riesz means of the eigenvalues of the operator $\mathcal K$ 
$$
\sum_k( |\lambda_k^\pm| - \lambda)_+ \le (2\pi)^{-d}\, |\Omega| \, \lambda^{1- \frac{d}{\alpha}}\, \int_{\mathbb R^d} (|\widehat K(\xi)|-1)_+\, d\xi.
$$
\end{cor}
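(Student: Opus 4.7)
The plan is to take the upper bound from Theorem \ref{Th1} applied to $\mathcal{Q}=\mathcal{K}$ and use the homogeneity of $\widehat{K}$ to extract the $\lambda$-dependence through a single change of variables. Concretely, Theorem \ref{Th1} yields
$$
\sum_k( |\lambda_k^\pm| - \lambda)_+ \le (2\pi)^{-d}\, |\Omega| \,  \int_{\mathbb R^d} (|\widehat K(\xi)|-\lambda)_+\, d\xi,
$$
so the entire task reduces to rewriting the integral on the right-hand side.

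Next I would substitute $\xi = \lambda^{-1/\alpha}\eta$. Since $\widehat{K}$ is homogeneous of degree $-\alpha$ (as recalled in the introduction), we have $\widehat{K}(\lambda^{-1/\alpha}\eta) = \lambda\,\widehat{K}(\eta)$, and the Jacobian gives $d\xi = \lambda^{-d/\alpha}\,d\eta$. Therefore
$$
(|\widehat{K}(\xi)|-\lambda)_+ = \lambda\,(|\widehat{K}(\eta)|-1)_+,
$$
and pulling the constant $\lambda^{1-d/\alpha}$ out of the integral produces exactly the claimed bound.

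The only non-routine point to check is that the right-hand side is actually finite, i.e.\ that $(|\widehat{K}(\eta)|-1)_+ \in L^1(\mathbb{R}^d)$. By the homogeneity of degree $-\alpha$ together with smoothness of $\widehat{K}$ away from the origin, the super-level set $\{|\widehat{K}|>1\}$ is contained in a ball around $0$ (the scale fixed by the maximum of $|\widehat{K}|$ on the unit sphere), and on that ball $|\widehat{K}(\eta)|\lesssim|\eta|^{-\alpha}$, which is integrable near the origin precisely because $\alpha<d$. Thus the assumption $0<\alpha<d$ guarantees the integral is finite, and this is the only place it is used; no further obstacle arises.
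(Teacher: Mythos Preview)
Your proof is correct and follows exactly the approach indicated in the paper: apply Theorem~\ref{Th1} with $\mathcal{Q}=\mathcal{K}$ and then scale $\xi\mapsto\lambda^{-1/\alpha}\eta$, using the degree $-\alpha$ homogeneity of $\widehat{K}$ to factor out $\lambda^{1-d/\alpha}$. Your additional verification that $(|\widehat{K}|-1)_+\in L^1(\mathbb{R}^d)$ under the hypothesis $0<\alpha<d$ is a welcome detail that the paper leaves implicit.
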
 

%---------------------
\noindent
Similarly we obtain the following result related to the Helmholtz operator.
%---------------------
\begin{cor}\label{Cor2}
Let $\varkappa\ge 0$, $\Omega\subset\mathbb R^d$, $d\ge1$,  be a domain of finite measure and let 
$$
\widehat Q (\xi)= \frac{1}{|\xi|^2 + \varkappa^2}.
$$
Then the eigenvalues of the operator $\mathcal Q$ satisfy the inequality
$$
\sum_k( |\lambda_k^\pm| - \lambda)_+ \le (2\pi)^{-d}\, |\Omega| \,\int_{\mathbb R^d} \left(\frac{1}{|\xi|^2 + \varkappa^2}-\lambda\right)_+\, d\xi.
$$
\end{cor}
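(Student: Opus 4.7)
\medskip
\noindent
\textbf{Proof proposal.} My plan is to deduce this as an immediate application of Theorem \ref{Th1}, with the operator $\mathcal{Q}$ being the one whose Fourier-side symbol is $\widehat{Q}(\xi) = (|\xi|^2+\varkappa^2)^{-1}$. Once the hypotheses of Theorem \ref{Th1} are verified, the stated inequality follows by substituting this explicit $\widehat{Q}$ into \eqref{ReiszQ}, with no further calculation required. So the entire proof consists of checking the three standing assumptions on $\widehat{Q}$ and $\mathcal{Q}$ used in Theorem \ref{Th1}.

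First, the decay $\widehat{Q}(\xi)\to 0$ as $|\xi|\to\infty$ is obvious. Local integrability $\widehat{Q}\in L^1_{\rm loc}(\mathbb{R}^d)$ is also clear for $\varkappa>0$, since $\widehat{Q}$ is then bounded; for the boundary case $\varkappa=0$ we only need $|\xi|^{-2}$ to be locally integrable, which requires $d\ge 3$ (this case is already covered by Corollary \ref{Cor1} with $\alpha=2$). The distribution $Q$ whose Fourier transform is $(|\xi|^2+\varkappa^2)^{-1}$ is the Green's function of the Helmholtz operator $-\Delta+\varkappa^2$, which for $\varkappa>0$ is a positive, radially symmetric function expressible via a modified Bessel function $K_\nu$; it has an integrable singularity of order $|x|^{-(d-2)}$ (or $\log|x|$ for $d=2$) at the origin and decays exponentially at infinity.

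The remaining point is compactness of $\mathcal{Q}$ on $L^2(\Omega)$, which I would verify by observing that $Q\in L^1_{\rm loc}(\mathbb{R}^d)$ and that $\int_{B_R}|Q(z)|^2\,dz<\infty$ in low dimensions, giving that $\mathcal{Q}$ is Hilbert--Schmidt. In higher dimensions, where the $L^2$ Hilbert--Schmidt condition fails, one uses the standard fact that $(-\Delta+\varkappa^2)^{-1}$ restricted to $\Omega$ is a bounded operator from $L^2(\Omega)$ into $H^2(\Omega)$, and by the Rellich--Kondrachov compact embedding $H^2(\Omega)\hookrightarrow L^2(\Omega)$ (valid for $\Omega$ of finite measure with sufficient regularity, or via a localization argument for arbitrary finite measure domains) the composition is compact.

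The only mild obstacle is the compactness verification in the general case of an arbitrary finite-measure domain $\Omega$, since the standard Rellich--Kondrachov theorem assumes boundary regularity. I would handle this by approximating $\Omega$ by a nicer set or by directly truncating the convolution kernel and using the Hilbert--Schmidt norm bound $\|\mathcal{Q}\|_{\rm HS}^2\le|\Omega|\int_{B_R}|Q(z)|^2dz+\text{tail}$, which suffices since $Q$ decays exponentially. With compactness in hand, Theorem \ref{Th1} applies verbatim and yields the claimed bound.
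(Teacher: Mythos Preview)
Your proposal is correct and follows exactly the paper's approach: the paper gives no separate proof of this corollary, stating only ``Similarly we obtain the following result'' after Corollary~\ref{Cor1}, so the intended argument is precisely the direct substitution of $\widehat Q(\xi)=(|\xi|^2+\varkappa^2)^{-1}$ into Theorem~\ref{Th1}. Your verification of the standing hypotheses (local integrability, decay at infinity, compactness of $\mathcal Q$) is more careful than anything the paper spells out, and your observation that the boundary case $\varkappa=0$ requires $d\ge 3$ for $\widehat Q\in L^1_{\rm loc}$ is a genuine point the paper glosses over.
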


\medskip
\section{The Lower Bound} 

\medskip
\noindent
Let $\Omega\subset \mathbb R^d$, $d\ge2$, be a strictly convex domain of finite measure, $|\Omega|<\infty$,  and suppose that there exists $P\in C^\infty(\RR^d)$ such that the boundary $\partial\Omega$ is given by $P(x)=0$, and that $|\nabla P|=1$ on $\partial\Omega$.

\medskip
\noindent
The proof of the lower bound is more involved and requires some geometric considerations about the domain $\Omega$. Due to convergence issues, we need to make the assumption $0<\alpha<d-1$ throughout this section.

\medskip
\noindent
Let $\mathbbm{1}_\Omega$ be the characteristic function of $\Omega$, and introduce the function
$$ \eta(z)=\Intr \mathbbm{1}_\Omega(z+y)\mathbbm{1}_\Omega(y)\diff y.$$
Geometrically, $\eta(z)$ measures the volume of intersection of $\Omega$ with its translation by a vector $z$.
We write $z=r\tilde{z}$, where $r=|z|$ and $\tilde{z}\in \mathbb{S}^{d-1}$, so we can consider $\eta$ as a function defined on $[0,\infty)\times \mathbb{S}^{d-1}$. We want to compute the first terms in the Taylor expansion of $\eta$ (in the sense of distributions) around $(0,\tilde{z})$. We have 
$$ \eta(r,\tilde{z})
=\eta(0,\tilde{z})+r\eta'_r(0,\tilde{z})+r^2\Intr_0^1 \eta''_r((1-t)r,\tilde{z}) \diff t.$$
It is clear that $\eta(0,\tilde{z})=|\Omega|$, and in \cite{L1} the second term is computed using the formula $\nabla\mathbbm{1}_\Omega(z)=\delta(P)\nabla P(z)$ (see \cite{GSh}):
\begin{align*}
\eta'_r(r,\tilde{z})
= \Intr \delta(P(z+y))(\nabla P(z+y),\tilde{z}) \mathbbm{1}_\Omega(y) \diff y.
\end{align*}
In order to compute this, we use the following fact about the composition of the Dirac delta function with another function, which holds for $f,g:\RR^d\to \RR$
$$ \Intr \delta(f(x))g(x) \diff x = \Intr_{f^{-1}(0)} \frac{g(x)}{|\nabla f(x)|} \diff\sigma,$$
where $\sigma$ is the surface measure on $f^{-1}(0)$. We then have
$$ \eta'_r(r,\tilde{z})
=\Intr_{L_z} \frac{(\nabla P(z+u),\tilde{z})}{|\nabla P(z+u)|} \diff \sigma(u)
=\Intr_{L_z} (\nabla P(z+u),\tilde{z}) \diff \sigma(u),$$
where $L_z$ is the intersection of the domain $\Omega$ with the surface $P(z+y)=0$ ($L_{(0,\tilde{z})}$ will be understood as a limit, and it will depend on the direction $\tilde{z}$), and $\sigma$ is the surface measure on $L_z$. For the second equality, we used the fact that $|\nabla P|=1$ on $\partial\Omega$.

\medskip
\noindent
We need the following geometric fact. Let $R_\Omega=\min\text{dist}(u_1,u_2)$ where the minimum is taken over all points $u_1,u_2\in\partial\Omega$ such that $(\nabla P(u_1),\nabla P(u_2))=-1$; in other words, $R_\Omega$ is the diameter of the largest sphere entirely contained in $\Omega$, or the maximum number with the property that $\Omega \cap (z+\Omega) \neq \emptyset$ for all $|z|<R_\Omega$. Then there exists a family of diffeomorphisms $T_z:\mathbb{S}_+^{d-1}\to L_z$ from a fixed hemisphere $\mathbb{S}_+^{d-1}$ onto the surface $L_z$, for $|z|<R_\Omega$, which is infinitely differentiable in $z$. 

\medskip
\noindent
This fact allows us to change variables and to obtain
$$ \eta'_r(r,\tilde{z})
= \Intr_{\mathbb{S}_+^{d-1}} (\nabla P(z+T_z\theta),\tilde{z}) J(T_z) \diff \sigma(\theta),$$
where $J(T_z)$ is the Jacobian determinant of $T_z$. Since the integrand above is a smooth function of $z$, this shows that $\eta$ is smooth on $[0,R_\Omega)\times \mathbb{S}^{d-1}$.

\medskip
\noindent
We can then write down the Taylor expansion of $\eta$ around $r=0$ in the form
\begin{equation}\label{eta}
 \eta(r,\tilde{z}) 
= |\Omega| + r A_\Omega(\tilde{z})+r^2 B_\Omega(r,\tilde{z}),
\end{equation}
where $A_\Omega$ is a smooth function on $\mathbb{S}^{d-1}$ and $B_\Omega(r,\tilde{z})=\Intr_0^1 \eta''_r((1-t)r,\tilde{z})t\diff t$ is smooth on $[0,R_\Omega)\times \mathbb{S}^{d-1}$.

%--------------
\begin{rem}
%Take $\Omega$ to be a ball of radius $1$. The volume of intersection of two balls in $\RR^d$ can be computed in terms of the regularised incomplete beta function $I(z;a,b)$ (see \cite{L2}), namely we obtain
%%
%\begin{equation*}
%\eta(z)=\frac{\pi^{d/2}}{\Gamma\left( \frac{d}{2}+1\right)} I \left(1-\frac{|z|^2}{4}; \frac{d+1}{2},\frac{1}{2}\right).
%\end{equation*} 
%%
%In this case we can compute explicitly the derivative $\eta'_r(0,\tilde{z})$ and we obtain
%%
%$$ A_\Omega(\tilde{z}) \equiv -\frac{1}{2\pi} |\mathbb{S}^d|,$$
%%
%so $A_\Omega$ is a negative constant proportional to the area of the unit sphere in $\RR^{d+1}$. 

Take $\Omega\subset \RR^d$ to be a ball of radius one. In this case, we have $P(x)=\frac{1}{2}(1-|x|^2)$ and for any $z$, $L_{(0,\tilde{z})}$ is the hemisphere of $\Omega$ centred around the vector $-\tilde{z}$. Using our previous computations we then have
$$ \eta'_r(0,\tilde{z}) = - \int_{L_{(0,\tilde{z})}} u\cdot \tilde{z} \diff\sigma_{d-1}(u).$$
Here $\sigma_{d-1}$ is the surface measure of the sphere $\mathbb{S}^{d-1}$. Due to the symmetry of $\Omega$, $A_\Omega(\tilde{z})=\eta'_r(0,\tilde{z})$ does not depend on $\tilde{z}$, so it is a constant. By making a convenient choice, we can then compute
\begin{align*}
A_\Omega &= - \int_{\mathbb{S}^{d-2}}  \int_0^{\pi/2} \cos(\varphi)\sin^{d-2}(\varphi) \diff\varphi \diff\sigma_{d-2}(u) 
\\
&= -\frac{1}{d-1}|\mathbb{S}^{d-2}|.
\end{align*}

\end{rem}
%--------------

\medskip
\noindent
Let $F(z)=\frac{1}{|\Omega|}K(z)|z|A_\Omega(\tilde{z})$, so $F$ is a homogeneous function of degree $d-\alpha+1$. By the general theory, $\widehat{F}$ is a homogeneous function of degree $-\alpha-1$. Since $\widehat{K}$ is also homogeneous of degree $-\alpha$, then there exist continuous functions $f,g:\mathbb{S}^{d-1}\to \CC$ such that
$$ \widehat{K}(\xi) = \frac{f(\tilde{\xi})}{|\xi|^\alpha} 
\quad \text{ and } \quad 
\widehat{F}(\xi) = \frac{g(\tilde{\xi})}{|\xi|^{\alpha+1}}.$$
Let 
$$ \gamma = \Intr_{\mathbb{S}^{d-1}} \sgn(f(\theta)) |f(\theta)|^\frac{d-\alpha-1}{\alpha} g(\theta)\diff\sigma(\theta).$$

\medskip
\noindent
We are now ready to state the main result of this section.

\begin{thm} \label{lowerbound}
Let $\Omega\subset \RR^d$ be a convex domain of finite measure and suppose that $0<\alpha<d-1$. Then we have the following lower bound for the Riesz means of the operator $\mathcal{K}$
\begin{multline*}
\sum_k( |\lambda_k^\pm| - \lambda)_+ 
\geq \frac{|\Omega|}{(2\pi)^d} \lambda^{1-\frac{d}{\alpha}}\Intr_{\RR^d} \left( |\widehat{K}(\xi)| - 1\right)_+ \diff \xi
\\
+ \frac{|\Omega|}{(2\pi)^d}\frac{\gamma}{d-\alpha-1} \lambda^{1-\frac{d-1}{\alpha}}
+ o\,(\lambda^{1-\frac{d-1}{\alpha}}),
\end{multline*}
as $\lambda\to 0$.
\end{thm}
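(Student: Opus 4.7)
The natural route is a coherent-state (Berezin--Lieb) argument, dual to the Parseval computation used in the proof of Theorem~\ref{Th1}. For each $\xi\in\RR^d$ introduce the trial functions $e_\xi(x)=\mathbbm{1}_\Omega(x)\,e^{i\xi\cdot x}\in L^2(\Omega)$, which satisfy $\|e_\xi\|^2=|\Omega|$ together with the resolution of identity $(2\pi)^{-d}\int_{\RR^d}|e_\xi\rangle\langle e_\xi|\diff\xi=I_{L^2(\Omega)}$ coming from Fourier inversion. Since $\phi(t)=(|t|-\lambda)_+$ is convex and $\mathcal K$ is self-adjoint compact, the Berezin--Lieb inequality (Jensen inside the trace) yields
\[
\sum_k(|\lambda_k^\pm|-\lambda)_+
\;\ge\;\frac{|\Omega|}{(2\pi)^d}\int_{\RR^d}\bigl(|H(\xi)|-\lambda\bigr)_+\diff\xi,
\qquad H(\xi):=\frac{\langle e_\xi,\mathcal K e_\xi\rangle}{|\Omega|},
\]
so the proof reduces to an asymptotic evaluation of the right-hand side as $\lambda\to 0^+$.

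A change of variables $z=x-y$ identifies $\langle e_\xi,\mathcal K e_\xi\rangle=\int_{\RR^d}K(z)\eta(z)e^{-i\xi\cdot z}\diff z$, so $H$ is essentially the Fourier transform of $K\eta/|\Omega|$. Fix a smooth cutoff $\chi$ equal to $1$ near the origin and supported in $\{|z|<R_\Omega\}$; on $\mathrm{supp}(\chi)$ the expansion \eqref{eta} is valid, and collecting the homogeneous pieces yields a decomposition
\[
H(\xi)=\widehat K(\xi)+\widehat F(\xi)+R(\xi),
\]
where $R$ gathers the Fourier transform of the compactly supported term $K|z|^2B_\Omega\chi$ together with contributions of the form $K\eta(1-\chi)$, $K(1-\chi)$ and $F(1-\chi)$. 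Because $K(-z)=\overline{K(z)}$ and $A_\Omega$ is even on the sphere (as $\eta$ is even in $z$), $F$ inherits the Hermitian symmetry, so $\widehat K$, $\widehat F$, and hence the spherical functions $f$, $g$ appearing in the statement, are real-valued. Standard Fourier decay estimates for compactly supported distributions with integrable singularity of order $|z|^{\alpha-d+2}$ at the origin, combined with the smoothness of $\eta$ on $\{0<|z|<\mathrm{diam}(\Omega)\}$, give $R(\xi)=O(|\xi|^{-\alpha-2})$ as $|\xi|\to\infty$.

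It remains to expand $\int(|H|-\lambda)_+\diff\xi$ as $\lambda\to 0^+$. In the bulk region $\{|\widehat K|>\lambda\}$, provided $|\widehat F+R|<|\widehat K|$, the exact identity $|H|=|\widehat K|+\sgn(\widehat K)(\widehat F+R)$ holds. Rescaling $\xi\mapsto\lambda^{-1/\alpha}\xi'$ together with the homogeneity of $\widehat K$ converts the zeroth-order contribution into $\lambda^{1-d/\alpha}\int_{\RR^d}(|\widehat K|-1)_+\diff\xi'$, which reproduces the leading term of the theorem. In polar coordinates $\xi=\rho\omega$ the linear correction evaluates to
\[
\int_{|\widehat K|>\lambda}\!\sgn(\widehat K(\xi))\,\widehat F(\xi)\,\diff\xi
=\int_{\mathbb S^{d-1}}\!\sgn(f(\omega))g(\omega)\!
\int_0^{(|f(\omega)|/\lambda)^{1/\alpha}}\!\rho^{d-\alpha-2}\diff\rho\,\diff\sigma(\omega)
=\frac{\gamma}{d-\alpha-1}\lambda^{1-(d-1)/\alpha},
\]
the inner integral converging precisely because $\alpha<d-1$. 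The boundary layer around $|\widehat K|=\lambda$ and the $R$-contribution are each bounded by $\int_{|\xi|\lesssim\lambda^{-1/\alpha}}|\xi|^{-\alpha-2}\diff\xi\simeq\lambda^{1-(d-2)/\alpha}=o(\lambda^{1-(d-1)/\alpha})$, which completes the argument. The principal technical obstacle is the pointwise control of the outer remainder $\widehat{K\eta(1-\chi)}$: since the Taylor expansion of $\eta$ is only guaranteed on $|z|<R_\Omega$, upgrading the a priori merely qualitative decay of this term to the required $|\xi|^{-\alpha-2}$ order requires carefully exploiting the geometric regularity of $\eta$ on the annulus $\{R_\Omega\le|z|<\mathrm{diam}(\Omega)\}$ via repeated integration by parts.
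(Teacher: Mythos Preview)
Your approach is essentially the same as the paper's: the Berezin--Lieb inequality you invoke is exactly the Jensen argument the paper writes out via the spectral measure $d(E_\mu e_\xi,e_\xi)$, and the subsequent Fourier expansion of $\langle e_\xi,\mathcal K e_\xi\rangle=\widehat{K\eta}(\xi)$ proceeds along the same lines.

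The one substantive difference is how the ``principal technical obstacle'' you flag is handled. You propose to work directly with $\mathcal K$ and then control $\widehat{K\eta(1-\chi)}$ by analyzing the regularity of $\eta$ on the outer annulus $R_\Omega\le|z|<\mathrm{diam}(\Omega)$; this is not carried out and is genuinely delicate, since $\eta$ need not be $C^\infty$ there. The paper sidesteps the issue entirely: it first replaces $\mathcal K$ by the operator $\mathcal K_0$ with kernel $K_0=\varkappa K$, where $\varkappa$ is a smooth cutoff supported in $\{|z|<R_\Omega\}$. The difference $\mathcal K-\mathcal K_0$ has a $C^\infty$ kernel, so its eigenvalues are $o(n^{-l})$ for every $l$ and do not affect the Riesz-means asymptotics. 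After this replacement, $K_0\eta$ is compactly supported inside the region where $\eta$ is smooth, and the remainder estimate becomes a direct application of Proposition~\ref{fourier1}. This is cleaner than the route you sketch and avoids any study of $\eta$ beyond $|z|<R_\Omega$.

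A minor remark: your final error bound $\int_{|\xi|\lesssim\lambda^{-1/\alpha}}|\xi|^{-\alpha-2}\diff\xi\simeq\lambda^{1-(d-2)/\alpha}$ is only literally correct for $\alpha<d-2$, since the integrand is not integrable at the origin for $d-2\le\alpha<d-1$. The paper avoids this by integrating only over $\{|\xi|\ge C\}$ and treating $\{|\xi|<C\}$ with the cruder bound $(X+Y)_+\ge X_+-|Y|$; either way the conclusion $o(\lambda^{1-(d-1)/\alpha})$ survives.
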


\medskip
\noindent
Before we prove this Theorem, we need some auxiliary results. 

%--------------
\begin{prop} \label{fourier1}
Let $h:\RR^d\to\RR$ be a smooth function with support contained in the ball of radius $R$ centred at $0$, for some $R>0$ and let $v$ be a homogeneous function of order $\kappa -d$, 
$\kappa>0$. Then the Fourier transform $\widehat{vh}$ of the product $vh$ satisfies 
$$ \widehat{vh}(\xi)= \widehat{v} h(0)  + O(|\xi|^{-\kappa-1}), \qquad {\rm as} \quad |\xi|\to\infty. $$
\end{prop}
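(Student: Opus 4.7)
My plan is to exploit the fact that Fourier transform sends products to convolutions, namely
$$
\widehat{vh}(\xi)=(2\pi)^{-d}\int_{\mathbb R^d}\widehat v(\xi-\eta)\,\widehat h(\eta)\,d\eta.
$$
This identity is legitimate: since $\kappa>0$, $v$ is locally integrable and $h$ is smooth with compact support, so $vh\in L^{1}(\mathbb R^{d})$. By the general theory of homogeneous distributions invoked earlier in the paper, $\widehat v$ is smooth on $\mathbb R^{d}\setminus\{0\}$, homogeneous of degree $-\kappa$, and satisfies $|\partial^{\beta}\widehat v(\xi)|\le C_{\beta}|\xi|^{-\kappa-|\beta|}$. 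Moreover $\widehat h$ is Schwartz class, being the Fourier transform of a $C^{\infty}_{c}$ function.

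The first step is to split the convolution integral at $|\eta|=|\xi|/2$. On the far region $\{|\eta|\ge|\xi|/2\}$ the rapid decay of $\widehat h$ overwhelms any polynomial growth of $\widehat v(\xi-\eta)$, giving a contribution of $O(|\xi|^{-N})$ for every $N$. On the near region $\{|\eta|<|\xi|/2\}$ one has $|\xi-\eta|\ge|\xi|/2$, so $\widehat v$ is smooth at every point that appears, and Taylor's theorem gives
$$
\widehat v(\xi-\eta)=\widehat v(\xi)-\nabla\widehat v(\xi)\cdot\eta+\tfrac{1}{2}\,\eta^{T}\nabla^{2}\widehat v(\xi_{*})\,\eta,
$$
with $\xi_{*}$ on the segment from $\xi$ to $\xi-\eta$, hence $|\xi_{*}|\ge|\xi|/2$.

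Substituting term by term, and using $\int\widehat h(\eta)\,d\eta=(2\pi)^{d}h(0)$ and $\int\eta_{j}\widehat h(\eta)\,d\eta=-i(2\pi)^{d}\partial_{j}h(0)$ (with rapidly decaying truncation errors from the cutoff), the constant piece of the Taylor expansion yields exactly $\widehat v(\xi)\,h(0)$; the linear piece produces a constant multiple of $\nabla\widehat v(\xi)\cdot\nabla h(0)$, which is $O(|\xi|^{-\kappa-1})$ by the homogeneity bound on $\nabla\widehat v$; the quadratic remainder is majorised by $C|\xi|^{-\kappa-2}\int|\eta|^{2}\,|\widehat h(\eta)|\,d\eta=O(|\xi|^{-\kappa-2})$. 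Assembling the three estimates gives the stated expansion, with the dominant error coming from the linear term.

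The only step that needs real care is justifying the Taylor expansion uniformly in $\eta$ across the near region: this is where the choice of cutoff $|\eta|<|\xi|/2$ pays off, because it guarantees that every point $\xi-t\eta$ for $t\in[0,1]$ stays at distance at least $|\xi|/2$ from the singularity of $\widehat v$ at the origin, so the homogeneity bounds on $\widehat v$ and its derivatives can be used with uniform constants. Apart from this bookkeeping, the proof is a routine size estimate and I do not anticipate a substantive obstacle.
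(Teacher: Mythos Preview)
Your approach is correct and is a genuine alternative to the paper's proof. The paper Taylor expands $h$ in \emph{physical} space: writing $h(x)=h(0)+\nabla h(0)\cdot x+\cdots$, each product $x^{\beta}v(x)$ is homogeneous of degree $\kappa-d+|\beta|$, so its Fourier transform is homogeneous of degree $-\kappa-|\beta|$ and hence $O(|\xi|^{-\kappa-|\beta|})$; the zeroth term yields $h(0)\,\widehat v$ and the first-order terms produce the $O(|\xi|^{-\kappa-1})$ error. You instead pass to the Fourier side, write $\widehat{vh}$ as a convolution, and Taylor expand $\widehat v$ around $\xi$. The two arguments are essentially dual to one another: the paper's route avoids any integral splitting and leans purely on homogeneity under the Fourier transform, while yours makes the error estimates more explicit at the cost of some bookkeeping with the cutoff.

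One point to tighten. In the far region $\{|\eta|\ge|\xi|/2\}$ you say that the rapid decay of $\widehat h$ ``overwhelms any polynomial growth of $\widehat v(\xi-\eta)$''. In fact $\widehat v(\xi-\eta)$ does not merely grow polynomially: it has a singularity of order $|\xi-\eta|^{-\kappa}$ at $\eta=\xi$, which lies inside the far region. When $\kappa<d$ this singularity is locally integrable and your estimate goes through as a Lebesgue integral. When $\kappa\ge d$ --- which can occur in the paper's application, where $\kappa=\alpha+2$ and $d-2\le\alpha<d-1$ is allowed --- the convolution must be read as a distributional pairing near that point, and a smooth rather than sharp cutoff is needed to make the split rigorous. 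This is a cosmetic repair and does not affect the substance of your argument.
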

%-------------

\begin{proof}
Since $h$ is smooth we can consider the its Taylor expansion around zero with a remainder term. Each term of the expansion is a homogeneous function that has a weaker singularity at zero than $\kappa$. The Fourier transform of the product of the remainder term and $v$  decays to zero as fast as we like depending on the number of term in the Taylor expansion.
\end{proof}

\noindent
We now to apply Proposition \ref{fourier1} in the context of the Taylor expansion of $\eta$, where the function $B$ is only smooth on $[0,R_\Omega)\times \mathbb{S}^{d-1}$. In order to avoid this problem, we introduce a smooth even function $\varkappa:\RR^d\to\RR$ such that $0\leq \varkappa \leq 1$, $\varkappa(x)=1$ for $|x|\leq R_\Omega/2$, and $\varkappa(x)=0$ for all $|x|\geq R_\Omega$. Now the function $h=\varkappa B$ satisfies the conditions of Proposition \ref{fourier1}. 

\medskip
\noindent
Let $K_0=\varkappa K$ and consider the operator 
$$\mathcal{K}_0 u(x) = \Intr_\Omega K_0(x-y)u(y)\diff y.$$
This is a compact self-adjoint operator on $L^2(\Omega)$ with positive and negative eigenvalues $\left\{ \mu_k^\pm \right\}_{k=1}^\infty$ accumulating at $0$. 
The kernel of the operator $\mathcal{K}_0 - \mathcal{K}$ is smooth. Therefore, using  
 \cite{W} (see also \cite{RSS}), we see that the eigenvalues $\nu_n^\pm$ of the operator $\mathcal{K}_0-\mathcal{K}$ satisfy $\nu^\pm_n = o(n^{-l})$, for all $l>0$. Therefore  it is sufficient to prove our result for $\mathcal{K}_0$.

\begin{proof} [Proof of Theorem \ref{lowerbound}.]

Let $\left\{\phi_k^\pm\right\}$ be an orthonormal set of eigenfunctions of $\mathcal{K}_0$ corresponding to the eigenvalues $\mu^\pm_k$. Fix $\lambda >0$ and let $\varphi(x):=(|x|-\lambda)_+$. Then we have
\begin{align*}
\Sumn_k (|\mu_k^\pm|-\lambda)_+ 
&= \Sumn_k \varphi(\mu_k^\pm) 
= \Sumn_k \varphi(\mu_k^\pm) \norm{\phi_k^\pm}_2
\\
&= \frac{1}{(2\pi)^d}\Sumn_k \varphi(\mu_k^\pm) \Intr_{\RR^d} |\widehat{\phi_k^\pm}(\xi)|^2 \diff\xi 
\\
&= \frac{1}{(2\pi)^d}\Sumn_k \varphi(\mu_k^\pm) \Intr_{\RR^d} \Intr_{\RR^d}\Intr_{\RR^d} \phi_k^\pm(x)\overline{\phi_k^\pm(y)} e^{-i(x-y)\cdot \xi}\diff x\diff y \diff\xi.
\end{align*}
Recall that $e_\xi(x)=e^{ix\cdot\xi}$. Using the spectral theorem for compact self-adjoint operators, we obtain
\begin{align*}
\Sumn_k (|\mu_k^\pm|-\lambda)_+ 
&=\frac{1}{(2\pi)^d}\Sumn_k \varphi(\mu_k^\pm) \Intr_{\RR^d} |(\phi_k^\pm,e_\xi)|^2\diff\xi 
\\
&= \frac{1}{(2\pi)^d}\Intr_{\RR^d} \Intr_{\RR}\varphi(\mu)\diff (E_\mu e_\xi,e_\xi)\diff\xi,
\end{align*}
where $E_\mu$ is the spectral measure of $\mathcal{K}_0$. 

\medskip
\noindent
Since $\Intr \diff (E_\mu e_\xi, e_\xi)=|\Omega|$ for all $\xi\in\RR^N$, then $\frac{1}{|\Omega|}\diff (E_\mu e_\xi, e_\xi)$ is a probability measure. Because also $\varphi$ is convex, then we can apply Jensen's inequality to obtain
$$ \varphi\left(\Intr \mu \frac{1}{|\Omega|}\diff (E_\mu e_\xi, e_\xi)\right) 
\leq \frac{1}{|\Omega|}\Intr \varphi(\mu) \diff (E_\mu e_\xi, e_\xi).$$
But, by the spectral theorem again,
\begin{align*}
\Intr \mu \diff (E_\mu e_\xi, e_\xi)
&=(\mathcal{K}_0e_\xi,e_\xi)
= \Intr_{\Omega}\Intr_{\Omega} K_0(x-y)e^{-i(x-y)\cdot \xi} \diff y\diff x
\\
&=\Intr_{\RR^d} K_0(z)\eta(z) e^{-iz\cdot \xi} \diff z.
\end{align*}

\medskip
\noindent
Using the expansion of $\eta$, we have
$$ \Intr_{\RR^d} K_0(z)\eta(z) e^{-iz\cdot \xi} \diff z
= |\Omega|\widehat{K_0}(\xi)+\widehat{G_1}(\xi)+\widehat{G_2}(\xi),$$
where 
$$G_1(z)=K_0(z)|z|A(\tilde{z}) \quad \text{ and } \quad G_2(z)=K_0(z)|z|^2B(|z|,\tilde{z}).$$
We have
$$ \widehat{K_0}(\xi)
= \widehat{K}(\xi)+\Intr_{\RR^d} K(z)(1-\varkappa(z))e^{-iz\cdot \xi}\diff z.$$
Since $\varkappa=1$ near $0$, then $K(z)(1-\varkappa(z))$ is smooth on $\RR^d$, so, by integration by parts, the integral in this relation is $O(|\xi|^{-k})$ as $|\xi|\to\infty$, for all $k>0$. Similarly,
$$ \widehat{G_1}(\xi)=|\Omega|\widehat{F}(\xi)+O(|\xi|^{-k})$$
as $|\xi|\to\infty$, for all $k>0$. Finally, by Proposition \ref{fourier1}, we have $\widehat{G_2}(\xi)=O(|\xi|^{-\alpha-2})$. Putting all these together, we obtained that
\begin{align*}
\frac{1}{|\Omega|}\Intr_{\RR^d} K_0(z)\eta(z) e^{-iz\cdot \xi} \diff z
= \widehat{K}(\xi) + \widehat{F}(\xi) + G(\xi),
\end{align*}
where $G(\xi)=O(|\xi|^{-\alpha-2})$ as $|\xi|\to\infty$. 

\medskip
\noindent
Going back to the computations above, we have
\begin{align} \label{integral}
\Sumn_k (|\mu_k^\pm|-\lambda)_+ 
\geq \frac{|\Omega|}{(2\pi)^d} \Intr_{\RR^d} \varphi(\widehat{K}(\xi) + \widehat{F}(\xi) + G(\xi))\diff\xi.
\end{align}
Thus, we need to estimate 
\begin{align*}
I:=\Intr_{\RR^d} &\left( \left|\widehat{K}(\xi) + \widehat{F}(\xi) + G(\xi)\right| -\lambda\right)_+ \diff\xi 
\\
&= \Intr_{\RR^d} \left( \widehat{K}(\xi) + \widehat{F}(\xi) + G(\xi) -\lambda\right)_+ \diff\xi
\\
& \quad + \Intr_{\RR^d} \left( -(\widehat{K}(\xi) + \widehat{F}(\xi) + G(\xi)) -\lambda\right)_+ \diff\xi.
\end{align*} 
Denote the two integrals on the right hand side by $I_1$ and $I_2$, respectively. 

\medskip
\noindent
Since $G(\xi)=O(|\xi|^{-\alpha-2})$ as $|\xi|\to\infty$, there exist constants $M,C>0$ such that 
$$ |G(\xi)|\leq M |\xi|^{-\alpha-2} \quad \text{ for all } \quad |\xi|\geq C.$$
Let $B_C$ be the ball of radius $C$ centered at the origin. We can estimate the integral $I_1$ by splitting it into an integral over $B_C$ and an integral over its complement, and treating each term separately. The integral over $B_C$ can be bounded easily using the inequality $(X+Y)_+\geq X_+ - |Y|$, and we obtain
\begin{align} \label{int1}
\Intr_{B_C} &\left( \widehat{K}(\xi) + \widehat{F}(\xi) + G(\xi)-\lambda \right)_+ \diff\xi \nonumber
\\
&\qquad \geq \Intr_{B_C} \left( \widehat{K}(\xi) + \widehat{F}(\xi) -\lambda \right)_+ \diff\xi 
- \Intr_{B_C} |G(\xi)| \diff\xi 
\end{align}

\medskip
\noindent
Let $m=\max\left\{1, \displaystyle\sup_{\mathbb{S}^{d-1}}|f(\theta)|,\displaystyle\sup_{\mathbb{S}^{d-1}}|g(\theta)| \right\}$. Then, it can be easily checked that for $\lambda<1$ we have
\begin{align*}
\widehat{K}(\xi) + \widehat{F}(\xi)
= \frac{f(\tilde{\xi})}{|\xi|^\alpha} + \frac{g(\tilde{\xi})}{|\xi|^{\alpha+1}} 
< \lambda \quad \text{ for all } |\xi|>\left(\frac{2m}{\lambda}\right)^{1/\alpha}.
\end{align*}
Using this, we can also estimate the second term 
\begin{align} \label{int2}
\Intr_{\RR^d\setminus B_C} &\left( \widehat{K}(\xi) + \widehat{F}(\xi) + G(\xi) -\lambda\right)_+ \diff\xi \nonumber
\\
&\geq \Intr_{\RR^d\setminus B_C} \left( \widehat{K}(\xi) + \widehat{F}(\xi) - \frac{M}{|\xi|^{\alpha +2}} -\lambda\right)_+ \diff\xi \nonumber
\\
&\geq \Intr_{\RR^d\setminus B_C} \left( \widehat{K}(\xi) + \widehat{F}(\xi) -\lambda\right)_+ \diff\xi
- \Intr_{C\leq |\xi| <\left(\frac{2m}{\lambda}\right)^{1/\alpha}} \frac{M}{|\xi|^{\alpha +2}} \diff \xi.
\end{align}
Adding up (\ref{int1}) and (\ref{int2}), we have obtained
\begin{align} \label{boundbelow}
I_1
\geq \Intr_{\RR^d} \left( \widehat{K}(\xi) + \widehat{F}(\xi) -\lambda\right)_+ \diff\xi
+ O(\lambda^{-\frac{d-\alpha-2}{\alpha}}),
\end{align}
as $\lambda\to 0$. Similarly, we could bound $I_1$ from above, so  inequality (\ref{boundbelow}) is in fact an equality.  

\medskip
\noindent
Exactly the same method could be applied to $I_2$ where we obtain
\begin{align*}
I_2 
 = \Intr_{\RR^d} \left( -(\widehat{K}(\xi) + \widehat{F}(\xi))  -\lambda\right)_+ \diff\xi
+ O(\lambda^{-\frac{d-\alpha-2}{\alpha}}),
\end{align*}
and thus
\begin{align*}
I 
 = \Intr_{\RR^d} \left( \left|\widehat{K}(\xi) + \widehat{F}(\xi)\right|  -\lambda\right)_+ \diff\xi
+ O(\lambda^{-\frac{d-\alpha-2}{\alpha}})
\end{align*}
as $\lambda\to 0$. We are now left to compute the integral appearing in this expression. Using polar coordinates $(r,\theta)$, this becomes
\begin{align*}
\Intr_{\RR^d} &\left( \left|\widehat{K}(\xi) + \widehat{F}(\xi)\right| -\lambda\right)_+ \diff\xi
\\
&\qquad = \Intr_{\mathbb{S}^{d-1}}\Intr_0^\infty \left( \left|\frac{f(\theta)}{r^\alpha}+\frac{g(\theta)}{r^{\alpha+1}}\right|-\lambda\right)_+ r^{d-1} \diff r\diff\sigma(\theta).
\end{align*}
We will use the following Lemma to compute this integral.

\begin{lem}
Let $C_1,C_2\in\RR$ be constants, and $\mu>0$ a variable which will be allowed to tend to 0. Then
\begin{equation} \label{lemmaest}
\begin{aligned} 
\Intr_0^\infty &\left(\left|\frac{C_1}{r^\alpha}+\frac{C_2}{r^{\alpha+1}}\right| - \mu \right)_+ r^{d-1} \diff r 
=\frac{\alpha}{d(d-\alpha)}|C_1|^\frac{d}{\alpha} \mu^{1-\frac{d}{\alpha}} 
\\
&\qquad + \frac{1}{d-\alpha-1}\sgn(C_1)|C_1|^\frac{d-\alpha-1}{\alpha}C_2 \mu^{1-\frac{d-1}{\alpha}} + o(\mu^{1-\frac{d-1}{\alpha}})
\end{aligned}
\end{equation}
as $\mu\to 0$. 
\end{lem}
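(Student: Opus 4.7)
The plan is to reduce the lemma to an $\epsilon$-asymptotic via rescaling. Substituting $r=s\mu^{-1/\alpha}$ and using the homogeneity of the integrand gives
\[
\Intr_0^\infty \left(\left|\frac{C_1}{r^\alpha}+\frac{C_2}{r^{\alpha+1}}\right| - \mu \right)_+ r^{d-1} \diff r
= \mu^{1-\frac{d}{\alpha}} J(\mu^{1/\alpha}),
\]
where
\[
J(\epsilon) := \Intr_0^\infty \left(\left|\frac{C_1}{s^\alpha} + \epsilon\,\frac{C_2}{s^{\alpha+1}}\right| - 1\right)_+ s^{d-1} \diff s.
\]
The lemma is then equivalent to the expansion $J(\epsilon) = J(0) + \epsilon\,J'(0) + o(\epsilon)$ as $\epsilon\to 0^+$, with the two leading terms producing, after multiplication by $\mu^{1-d/\alpha}$, the powers $\mu^{1-d/\alpha}$ and $\mu^{1-(d-1)/\alpha}$ as claimed.

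The value $J(0)=\int_0^{|C_1|^{1/\alpha}}(|C_1|/s^\alpha-1)s^{d-1}\diff s = \frac{\alpha}{d(d-\alpha)}|C_1|^{d/\alpha}$ is elementary. For the first-order correction I would differentiate under the integral sign: writing $\phi_\epsilon(s):=C_1/s^\alpha+\epsilon C_2/s^{\alpha+1}$ and $\psi(y):=(|y|-1)_+$, the Lipschitz function $\psi$ satisfies $\psi'(y)=\sgn(y)\mathbbm{1}_{|y|>1}$ almost everywhere, so formally
\[
J'(\epsilon) = C_2\Intr_{\{|\phi_\epsilon|>1\}} \sgn(\phi_\epsilon(s))\,s^{d-\alpha-2}\diff s \;=:\; C_2\,F(\epsilon).
\]
Integrability of $s^{d-\alpha-2}$ at the origin forces $d-\alpha-1>0$, which is exactly the standing hypothesis. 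Evaluating at $\epsilon=0$, where $\{|\phi_0|>1\}=(0,|C_1|^{1/\alpha})$ and $\sgn(\phi_0)=\sgn(C_1)$, yields $C_2F(0)=\frac{\sgn(C_1)|C_1|^{(d-\alpha-1)/\alpha}C_2}{d-\alpha-1}$, precisely the second coefficient in the statement.

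The remaining task is to justify the $o(\epsilon)$ remainder. A Fubini argument using the $1$-Lipschitz property of $\psi$ shows that $J$ is absolutely continuous in $\epsilon$ with $J'(\epsilon)=C_2F(\epsilon)$ a.e., so $J(\epsilon)-J(0)=C_2\int_0^\epsilon F(t)\diff t$; it then suffices to prove $F(t)\to F(0)$ with a quantitative rate. The delicate point, and the main obstacle, is the case in which $C_1$ and $C_2$ have opposite signs: then $\phi_t$ has a zero at $s_0\sim t$, and $\{|\phi_t|>1\}$ acquires an additional small component $(0,r_1(t))$ with $r_1(t)\sim t$ on which $\sgn(\phi_t)=-\sgn(C_1)$ opposes the $\sgn(C_1)$ entering $F(0)$. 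Controlling the boundary points of $\{|\phi_t|>1\}$ by the implicit function theorem shows that its symmetric difference with $(0,|C_1|^{1/\alpha})$ consists of intervals of length $O(t)$ near $s=0$ and near $s=|C_1|^{1/\alpha}$; integrating $s^{d-\alpha-2}$ over them yields $|F(t)-F(0)|\leq C(t+t^{d-\alpha-1})$. Integrating on $[0,\epsilon]$ gives a remainder of order $\epsilon^2+\epsilon^{d-\alpha}$, which is $o(\epsilon)$ since $d-\alpha>1$ by hypothesis, completing the expansion.
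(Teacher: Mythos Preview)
Your argument is correct and takes a genuinely different route from the paper. The paper proceeds by direct case analysis on the signs of $C_1,C_2$: it locates the roots of $h(r)=\pm\mu$, expands them explicitly as $\mu\to0$ (e.g.\ $r^+_2(\mu)=C_1^{1/\alpha}\mu^{-1/\alpha}+\tfrac{C_2}{\alpha C_1}+o(1)$), and then evaluates the resulting definite integrals by hand---a computation the authors themselves call ``straightforward yet rather tedious.'' Your rescaling $r=s\mu^{-1/\alpha}$ factors out the leading power and reduces the statement to a first-order Taylor expansion of $J(\epsilon)$ at $\epsilon=0$; the two coefficients then drop out from an elementary integral and from differentiating $(\lvert\cdot\rvert-1)_+$ under the integral sign, while the remainder is handled by the Lipschitz/Fubini identity $J(\epsilon)-J(0)=C_2\int_0^\epsilon F(t)\,dt$ together with the perturbative bound $|F(t)-F(0)|=O(t+t^{\,d-\alpha-1})$ on the superlevel set. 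Your approach is more conceptual and avoids the explicit root expansions and sign-by-sign integrations; the paper's approach, on the other hand, gives the root asymptotics explicitly and could in principle be pushed to produce further terms. Two small points worth recording explicitly in your write-up: the Lipschitz bound for $J$ requires observing that for $\epsilon$ in a fixed compact interval the set $\{|\phi_\epsilon|>1\}$ is contained in a fixed bounded $s$-interval (so that $\int s^{d-\alpha-2}\,ds$ is taken over a finite range), and the ``same sign'' case $C_1C_2\ge0$ should be mentioned (there the set $\{|\phi_t|>1\}$ is a single interval $(0,r_+(t))$ with $r_+(t)=|C_1|^{1/\alpha}+O(t)$, so $|F(t)-F(0)|=O(t)$ directly).
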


\begin{proof}
Let $h:(0,\infty)\to\RR$ be defined by $h(r)=\frac{C_1}{r^\alpha}+\frac{C_2}{r^{\alpha+1}}$. We first need to find for which values of $r$ we have $h(r)\geq \mu$ and $h(r)\leq -\mu$. 

We distinguish a number of cases depending on the sign of the constants $C_1$ and $C_2$. The case $C_1=0$ is immediate. 

If $C_1>0$ and $C_2<0$ (the case $C_1<0$ and $C_2>0$ is very similar), then the function $h$ increases from $-\infty$ up to a positive value and then decreases to $0$. The equation $h(r)=-\mu$ has one real solution $r^-(\mu)$, and the equation $h(r)=\mu$ has, for $\mu$ small enough, exactly two real solutions, say $r^+_1(\mu)<r^+_2(\mu)$ (see Figure 1). Then $h^{-1}((-\infty,-\mu])=(0,r^-(\mu)]$, and $h^{-1}([\mu,\infty))=[r^+_1(\mu),r^+_2(\mu)]$. These roots can be estimated as follows
\begin{align*}
r^-(\mu)
&=-\frac{C_2}{C_1} - \frac{1}{C_1} \left( -\frac{C_2}{C_1}\right)^{\alpha+1} \mu + o(\mu)
\\
r^+_1(\mu)
&=-\frac{C_2}{C_1} + \frac{1}{C_1} \left( -\frac{C_2}{C_1}\right)^{\alpha+1} \mu + o(\mu)
\\
r^+_2(\mu)
&=C_1^{1/\alpha} \mu^{-1/\alpha}+ \frac{C_2}{\alpha C_1} + o(1)
\end{align*}
as $\mu\to 0$. A straightforward (yet rather tedious) computation then gives (\ref{lemmaest}).

If $C_1>0$ and $C_2\geq 0$ (and similarly if $C_1<0$ and $C_2\leq 0$), then the function $h$ is strictly decreasing from $\infty$ to $0$, so the equation $h(r)=\mu$ has a unique solution $r^+(\mu)$, and $h^{-1}([1,\infty))=[r^+(\mu),\infty)$ (see Figure 2). We can estimate the root 
$$ r^+(\mu) = C_1^{1/\alpha}\mu^{-1/\alpha} + \frac{C_2}{\alpha C_1}  + o(1)$$
as $\mu\to 0$ and (\ref{lemmaest}) follows easily.

\begin{figure}
    \centering
    \begin{minipage}{0.5\textwidth}
        \centering
        \begin{tikzpicture}[yscale=10]
\draw [thick, help lines, ->] (0,0) -- (4,0);
\draw [thick, help lines, ->] (0,-0.2) -- (0,0.2);
\draw [ultra thick, smooth,domain=0.87:4] plot (\x, {pow(\x,-2)-pow(\x,-3)});
\draw [dashed] (0,0.1) -- (4,0.1);
\draw [dashed] (0,-0.1) -- (4,-0.1);
\node [left] at (0,0.2) {$h(r)$};
\node [right] at (4,0) {$r$};
\draw [fill] (1.15345,0.1) ellipse  (0.1 and 0.01);
\draw [fill] (2.42362,0.1) ellipse  (0.1 and 0.01);
\draw [fill] (0.94,-0.1) ellipse  (0.1 and 0.01);
\node [left] at (1.3,0.14) {$r^+_1(\mu)$};
\node [right] at (2.4,0.14) {$r^+_2(\mu)$};
\node [right] at (1,-0.14) {$r^-(\mu)$};
\node [left] at (0,0.1) {$\mu$};
\node [left] at (0,-0.1) {$-\mu$};
\end{tikzpicture}
        \caption{The case $C_1>0, C_2<0$.}
    \end{minipage}\hfill
    \begin{minipage}{0.5\textwidth}
        \centering
        \begin{tikzpicture}[yscale=1]
\draw [thick, help lines, ->] (0,0) -- (4,0);
\draw [thick, help lines, ->] (0,0) -- (0,4);
\draw [ultra thick, smooth,domain=0.77:4] plot (\x, {pow(\x,-2)+pow(\x,-3)});
\draw [dashed] (0,1) -- (4,1);
\node [left] at (0,4) {$h(r)$};
\node [right] at (4,0) {$r$};
\draw [fill] (1.3247,1) circle  (0.1);
\node [right] at (1.3247,1.4) {$r^+(\mu)$};
\node [left] at (0,1) {$\mu$};
\end{tikzpicture}
        \caption{The case $C_1>0 ,C_2\geq 0$.}
    \end{minipage}
\end{figure}
\end{proof}

Using this Lemma we have
\begin{multline*}
\Intr_{\RR^d}  \left( \left|\widehat{K}(\xi) + \widehat{F}(\xi)\right| -\lambda\right)_+ \diff\xi
= \frac{\alpha}{d(d-\alpha)}\lambda^{1-\frac{d}{\alpha}}\Intr_{\mathbb{S}^{d-1}} |f(\theta)|^\frac{d}{\alpha} \diff\sigma(\theta) 
\\
\quad +\frac{1}{d-\alpha-1} \lambda^{1-\frac{d-1}{\alpha}} \Intr_{\mathbb{S}^{d-1}} \sgn(f(\theta)) |f(\theta)|^\frac{d-\alpha-1}{\alpha} g(\theta)\diff\sigma(\theta)
+o(\lambda^{1-\frac{d-1}{\alpha}}).
\end{multline*}
The first term on the right hand side of this equation can be simplified using
\begin{align*}
\Intr_{\RR^d} \left( |\widehat{K}(z)| - 1\right)_+ \diff z
&= \Intr_{\mathbb{S}^{d-1}} \Intr_0^\infty \left( \frac{|f(s)|}{r^{\alpha}}-1\right)_+ r^{d-1} \diff r \diff \sigma(s)
\\
&=\Intr_{\mathbb{S}^{d-1}} \Intr_0^{|f(s)|^{1/\alpha}} (|f(s)|r^{d-\alpha-1}- r^{d-1}) \diff r \diff \sigma(s)
\\
&=\frac{\alpha}{d(d-\alpha)} \Intr_{\mathbb{S}^{d-1}} |f(s)|^{d/\alpha} \diff \sigma(s).
\end{align*}

\medskip
\noindent
This completes the proof of Theorem \ref{lowerbound}.
\end{proof}

\section{Applications}

Let us consider a special case of spherically symmetric kernels
$$
\widehat{K}(\xi) = |\xi|^{-\alpha},
$$
so
\begin{equation*}
K(z)=C|z|^{-(d-\alpha)},
\end{equation*}
where $C=\pi^{-d/2}2^{-\alpha}\frac{\Gamma\left(\frac{d-\alpha}{2}\right)}{\Gamma\left(\frac{\alpha}{2}\right)}$.

%--------------------------
\begin{cor}
Let $ \widehat{K}(\xi) = |\xi|^{-\alpha}$, $0<\alpha<d$. Then
\begin{equation}\label{module}
\sum_k( |\lambda_k| - \lambda)_+ \le \frac{|\Omega|}{(2\pi)^d} \frac{\alpha}{d(d-\alpha)} |\mathbb S^{d-1}| \lambda^{1-\frac{d}{\alpha}} .
\end{equation}
If, moreover, $\alpha<d-1$, we also have the lower bound
\begin{multline*}
\sum_k (|\lambda_k| - \lambda)_+ \geq \frac{|\Omega|}{(2\pi)^d} \frac{\alpha}{d(d-\alpha)} |\mathbb S^{d-1}| \lambda^{1-\frac{d}{\alpha}}
\\
+ \frac{1}{(2\pi)^d} \frac{\Gamma\left(\frac{\alpha+1}{2}\right)\Gamma\left(\frac{d-\alpha}{2}\right)}{\Gamma\left(\frac{\alpha}{2}\right)\Gamma\left(\frac{d-\alpha+1}{2}\right)} \lambda^{1-\frac{d-1}{\alpha}}\int_{\mathbb{S}^{d-1}} A_\Omega(\theta) \diff\sigma(\theta)
+o(\lambda^{1-\frac{d-1}{\alpha}})
\end{multline*}
as $\lambda\to 0$.

\end{cor}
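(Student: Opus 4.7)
The upper bound is immediate from Corollary~\ref{Cor1}: in polar coordinates,
\[
\int_{\RR^d}(|\xi|^{-\alpha}-1)_+\,d\xi \;=\; |\mathbb S^{d-1}|\int_0^1(r^{-\alpha}-1)r^{d-1}\,dr \;=\; \frac{\alpha}{d(d-\alpha)}|\mathbb S^{d-1}|,
\]
which, inserted into Corollary~\ref{Cor1}, produces \eqref{module}.

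For the lower bound the plan is to feed the present hypotheses into Theorem~\ref{lowerbound}. Here $f(\tilde\xi)\equiv 1$, so the first integral appearing in Theorem~\ref{lowerbound} is exactly the one computed above, and the leading term of the claimed lower bound drops out at once. Moreover $\sgn(f)\,|f|^{(d-\alpha-1)/\alpha}\equiv 1$, which collapses the constant $\gamma$ to
\[
\gamma \;=\; \int_{\mathbb{S}^{d-1}} g(\theta)\,d\sigma(\theta),
\]
so the only real work is to evaluate this angular mean.

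Since $K(z)=C|z|^{-(d-\alpha)}$, the auxiliary function of Theorem~\ref{lowerbound} is $F(z)=\frac{C}{|\Omega|}|z|^{-(d-\alpha-1)}A_\Omega(\tilde z)$, and $g=\widehat F$ on $\mathbb{S}^{d-1}$. I would interchange the order of integration to write
\[
\int_{\mathbb{S}^{d-1}}\widehat F(\theta)\,d\sigma(\theta) \;=\; \int_{\RR^d} F(z)\Bigl(\int_{\mathbb{S}^{d-1}} e^{-iz\cdot\theta}\,d\sigma(\theta)\Bigr)dz,
\]
and then invoke the classical identity $\int_{\mathbb{S}^{d-1}} e^{-iz\cdot\theta}\,d\sigma(\theta) = (2\pi)^{d/2}|z|^{-(d-2)/2}J_{(d-2)/2}(|z|)$. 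Passing to polar coordinates then separates the variables: the angular factor produces $\int_{\mathbb{S}^{d-1}}A_\Omega(\tilde z)\,d\sigma(\tilde z)$, while the radial factor reduces to the Weber-Schafheitlin integral
\[
\int_0^\infty r^{\alpha+1-d/2}\,J_{(d-2)/2}(r)\,dr \;=\; 2^{\alpha+1-d/2}\,\frac{\Gamma(\tfrac{\alpha+1}{2})}{\Gamma(\tfrac{d-\alpha-1}{2})}.
\]

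The remainder is bookkeeping: collecting the explicit constant $C$, the factor $(2\pi)^{d/2}$ and the power of $2$, and then using the identity $\Gamma(\tfrac{d-\alpha+1}{2}) = \tfrac{d-\alpha-1}{2}\,\Gamma(\tfrac{d-\alpha-1}{2})$ to bring the denominator into the form stated in the corollary, the factor $(d-\alpha-1)$ cancels precisely against the denominator in Theorem~\ref{lowerbound} and delivers the stated coefficient. The main obstacle I anticipate is the justification of the Fubini swap and the convergence of the Weber-Schafheitlin integral; both require the parameter restriction $0<\alpha<d-1$, which is exactly the hypothesis furnished by Theorem~\ref{lowerbound}.
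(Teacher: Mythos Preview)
Your upper bound and your overall strategy for the lower bound coincide with the paper's: apply Corollary~\ref{Cor1} for the first inequality, and for the second reduce via Theorem~\ref{lowerbound} to the evaluation of $\gamma=\int_{\mathbb{S}^{d-1}}g(\theta)\,d\sigma(\theta)$ (since $f\equiv 1$). Your final bookkeeping with the Gamma functions is also correct.

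The divergence from the paper is in how $\gamma$ is computed, and here there is a genuine gap. Your plan is to pair $F$ with the surface measure of the unit sphere, swap the order of integration, and evaluate the resulting Weber--Schafheitlin integral $\int_0^\infty r^{\alpha+1-d/2}J_{(d-2)/2}(r)\,dr$. The trouble is convergence at infinity: since $J_{(d-2)/2}(r)\sim\sqrt{2/(\pi r)}\cos(r-\text{phase})$, the integrand behaves like $r^{\alpha+1/2-d/2}$ times an oscillation, and this is (even conditionally) integrable only when $\alpha+1/2-d/2<0$, i.e.\ $\alpha<(d-1)/2$. Thus your argument as written covers only half of the stated range $0<\alpha<d-1$, not all of it as you claim in your final sentence. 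The Fubini step is similarly problematic, since $F$ is merely a homogeneous tempered distribution and is not integrable at infinity.

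The paper sidesteps this by pairing with a Gaussian instead of the sphere measure: set $E(z)=e^{-|z|^2/2}$ and use Parseval in the form $(2\pi)^d\int F E=\int\widehat F\,\widehat E$. Both sides become radial Gaussian integrals (hence absolutely convergent for all $0<\alpha<d-1$), and after polar coordinates one reads off
\[
\gamma=\frac{2}{|\Omega|}\,\frac{\Gamma\!\left(\tfrac{\alpha+1}{2}\right)\Gamma\!\left(\tfrac{d-\alpha}{2}\right)}{\Gamma\!\left(\tfrac{\alpha}{2}\right)\Gamma\!\left(\tfrac{d-\alpha-1}{2}\right)}\int_{\mathbb{S}^{d-1}}A_\Omega(\theta)\,d\sigma(\theta),
\]
from which the stated coefficient follows by the same Gamma identity you use. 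Your Bessel approach can be rescued by analytic continuation in $\alpha$ (the right-hand side of the Weber--Schafheitlin formula is meromorphic), but this requires an extra argument that the Gaussian trick avoids entirely.
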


%---------------------
\begin{proof}
Using Theorem \ref{Th1} we find 
\begin{multline*} 
\sum_k( |\lambda_k| - \lambda)_+ \le (2\pi)^{-d}\,  |\Omega| \,\,\lambda^{1-\frac{d}{\alpha}} \,  \int_{\mathbb R^d} (|\xi|^{-\alpha} -1)_+\, d\xi\\
=
\frac{|\Omega|}{(2\pi)^d} \frac{\alpha}{d(d-\alpha)} |\mathbb S^{d-1}\, | \lambda^{1-\frac{d}{\alpha}}. 
\end{multline*} 

For the lower bound, keeping the notation from the previous section, we first need to compute the constant $\gamma=\int_{\mathbb{S}^{d-1}} g(\theta) \diff\theta$. Consider the function $E(z)=e^{-|z|^2/2}$, so $\widehat{E}(\xi)=(2\pi)^{d/2}E(\xi)$. By Parseval's theorem we have
\begin{align*}
(2\pi)^d \int_{\RR^d} F(z)E(z) \diff z = \int_{\RR^d} \widehat{F}(z)\widehat{E}(z) \diff\xi.
\end{align*}
Using polar coordinates $z=r\theta$ on both sides this becomes
\begin{multline*}
(2\pi)^{d/2} \frac{C}{|\Omega|} \int_{\mathbb{S}^{d-1}} A_\Omega(\theta) \diff\sigma(\theta) \int_0^\infty r^\alpha e^{-r^2/2} \diff r 
\\
= \int_{\mathbb{S}^{d-1}} g(\theta) \diff\sigma(\theta) \int_0^\infty r^{d-\alpha-2} e^{-r^2/2} \diff r. 
\end{multline*}
Changing the variable $y=r^2/2$ and using the definition of the gamma function, we finally obtain
\begin{equation*}
\gamma = \frac{2}{|\Omega|}\frac{\Gamma\left(\frac{\alpha+1}{2}\right)\Gamma\left(\frac{d-\alpha}{2}\right)}{\Gamma\left(\frac{\alpha}{2}\right)\Gamma\left(\frac{d-\alpha-1}{2}\right)} \int_{\mathbb{S}^{d-1}} A_\Omega(\theta) \diff\sigma(\theta),
\end{equation*}
and the bound follows from Theorem \ref{lowerbound}.
\end{proof}

%---------------------

\begin{rem}
Note that if $\alpha = 2$, $d\ge3$,  then $0<\alpha<d$, kernel $K(x)$ is the fundamental solution for the Laplacian in $\mathbb R^d$.  We obtain
$$
\sum_k( |\lambda_k| - \lambda)_+ \le (2\pi)^{-d}\,  |\Omega|\, |\mathbb S^{d-1}|\, \lambda^{1-\frac{d}{2}} \,  \left(\frac{2}{d (d-2)}\right).
$$
In particular, if $d=3$, then 
$$
\sum_k( |\lambda_k| - \lambda)_+ \le \frac{1}{\sqrt \lambda} \, \frac{1}{12 \, \pi^3 }\, |\Omega|\, |\mathbb S^{2}|.
$$
\end{rem}

\medskip
\noindent
This inequality allows us to obtain a bound on the number of the eigenvalues greater than  $\lambda$. 
%--------------------------
\begin{cor}
Let $ \widehat{K}(\xi) = |\xi|^{-\alpha}$, $0<\alpha<d$. Then $\mathcal K\ge0$, the eigenvalues of $\lambda_k\ge 0$  and for the number of the eigenvalues greater than $\lambda$ of the operator $\mathcal K$ we have
\begin{equation}\label{n(s)}
n(\lambda) = \#\{ k: \lambda_k>\lambda\} \le (2\pi)^{-d}   \,\lambda^{-\frac{d}{\alpha}} |\Omega| \, |\mathbb S^{d-1}|\, \frac{d^{d/\alpha}}{d\, (d-\alpha)^{d/\alpha}}.
\end{equation}
\end{cor}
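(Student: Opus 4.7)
The plan is to derive the counting function bound from the Riesz means inequality \eqref{module} via the standard Chebyshev trick, then optimize the resulting one-parameter family of bounds. First I would observe that $\mathcal K\ge 0$: since $\widehat K(\xi)=|\xi|^{-\alpha}\ge 0$, Plancherel gives $(\mathcal K u,u)=(2\pi)^{-d}\int \widehat K(\xi)|\widehat u(\xi)|^2\,d\xi\ge 0$ for every $u\in L^2(\Omega)$ extended by zero, so all eigenvalues $\lambda_k$ are nonnegative.

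Next, for any $0<\mu<\lambda$, I would use the trivial inequality
\[
n(\lambda)\,(\lambda-\mu)
=\sum_{\lambda_k>\lambda}(\lambda-\mu)
\le \sum_{\lambda_k>\lambda}(\lambda_k-\mu)
\le \sum_k(\lambda_k-\mu)_+,
\]
and then invoke \eqref{module} at level $\mu$ to obtain
\[
n(\lambda)\le \frac{|\Omega|\,|\mathbb S^{d-1}|}{(2\pi)^d}\,\frac{\alpha}{d(d-\alpha)}\,\frac{\mu^{1-\frac{d}{\alpha}}}{\lambda-\mu}.
\]

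The final step is to minimize the right-hand side over $\mu\in(0,\lambda)$. Writing $\mu=t\lambda$ with $t\in(0,1)$, the dependence on $\mu$ factors out as $\lambda^{-d/\alpha}\,t^{1-d/\alpha}/(1-t)$, so it suffices to maximize $g(t)=t^{d/\alpha-1}(1-t)$. Elementary calculus gives the critical point $t^\ast=(d-\alpha)/d$, at which
\[
g(t^\ast)=\Bigl(\tfrac{d-\alpha}{d}\Bigr)^{\!d/\alpha-1}\cdot\tfrac{\alpha}{d},
\]
and substituting this value back collapses the prefactor $\alpha/(d(d-\alpha))$ together with $1/g(t^\ast)$ to exactly $d^{d/\alpha}/\bigl(d(d-\alpha)^{d/\alpha}\bigr)$, which is the constant claimed in \eqref{n(s)}. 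No part of this argument is delicate; the only thing to check carefully is the arithmetic of the final simplification, which I expect to pose the only real bookkeeping obstacle.
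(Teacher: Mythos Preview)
Your proposal is correct and follows essentially the same approach as the paper: bound $n(\lambda)$ by $(\lambda-\tau)^{-1}\sum_k(\lambda_k-\tau)_+$, apply the Riesz means estimate \eqref{module}, and optimize in the auxiliary parameter, arriving at the same minimizer $\mu=\lambda(1-\alpha/d)$. The only addition is your explicit Plancherel justification of $\mathcal K\ge 0$, which the paper states without proof.
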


\begin{proof}
Let 
\begin{equation*}
\chi_\lambda(t)= 
\begin{cases}
1, & t\ge \lambda, \\
0, & 0\le t <\lambda.
\end{cases}
\end{equation*}
Let $\tau < \lambda$. Then clearly $\chi_\lambda(t) \le \frac{(t-\tau)_+}{(\lambda-\tau)}$.
\begin{multline*}
n(\lambda) = \sum_k \chi(\lambda_k) \le \sum_k \frac{(\lambda_k-\tau)_+}{(\lambda-\tau)}\\
\le  (2\pi)^{-d}  \, |\Omega| \, |\mathbb S^{d-1}|\,\frac{\tau^{1-\frac{d}{\alpha}}}{\lambda-\tau}  \, \left(\frac{\alpha}{d(d- \alpha)}\right)
\end{multline*}
Minimising with respect to $\tau$ we find $\tau = \lambda(1-\alpha/d)$ and thus arrive at \eqref{n(s)}.
\end{proof}

%---------------------

Let us consider spectrum of the operator of Dirichlet boundary value problem $-\Delta^{\mathcal D}$ acting in $L^2(\Omega)$, where $\Omega\subset\mathbb R^d$ is a domain finite measure.
\begin{align*}
-\Delta & u(x)  = \nu u(x),\\
& u(x)\Big|_{x\in\partial\Omega} = 0.
\end{align*}
The best known estimate known estimate for the number $N(\nu)$ of the eigenvalues $\{\nu_k\}$ below $\nu$ of this operator follows from the sharp semiclassical inequality for the Riesz means
$$
\sum_k \left(\nu-\nu_k\right)_+\le (2\pi)^{-d} \, |\Omega|\, \nu^{1+d/2}\, \int_{|\xi|<1} (1-|\xi|^2)\, d\xi.
$$
The latter implies (see \cite{L2})
\begin{equation}\label{N(lambda)}
N(\nu) = \#\{k:\, \nu_k<\nu\} \le (2\pi)^{-d} \, |\Omega|\, \nu^{d/2}\, |\mathbb S^{d-1}| \, \, \frac{1}{d} \, \left(\frac{d+2}{d}\right)^{\frac{d}{2}}.
\end{equation}

\medskip
\noindent
We can compare the last estimate with the semiclassical constant that is still open P\'olya conjecture stated for all domains of finite  measure
$$
N(\nu) \le  (2\pi)^{-d}  \, |\Omega| \, \nu^{d/2} \,\int_{|\xi|^2<1} d\xi =  (2\pi)^{-d}  \, |\Omega|\, \nu^{d/2}\, |\mathbb S^{d-1}| \, \, \frac{1}{d}.
$$
Note that if $\alpha = 2$ and $\widehat{K}(\xi) = |\xi|^{-2}$ then the operator $\mathcal K$ is inverse to $-\widetilde{\Delta} $ with some non-local boundary conditions and since the eigenvalues of $-\Delta^{\mathcal D}$ are larger than the eigenvalues of $-\widetilde{\Delta}$  we have 
$$
N(\nu, -\Delta^{\mathcal D}) \le N(\nu,  -\widetilde{\Delta}) \le n(1/\nu).
$$
Thus we obtain

%-------------

\begin{thm} Let $d\ge 3$ and let $\Omega\subset\mathbb R^d$ be a domain of finite measure. Then for the number of the eigenvalues below $\nu$ of the Dirichlet Laplacian we have
\begin{equation}\label{PL}
N(\nu, -\Delta^{\mathcal D}) \le (2\pi)^{-d}   \,\nu^{\frac{d}{2}}\, |\Omega| \, |\mathbb S^{d-1}|\, \frac{1}{d}\, \left(\frac{d}{d-2}\right)^{d/2}.
\end{equation}
\end{thm}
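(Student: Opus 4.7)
The plan is to specialize the upper bound for $n(\lambda)$ from \eqref{n(s)} to the case $\alpha=2$, and combine it with the variational comparison between the Dirichlet Laplacian and the auxiliary operator $-\widetilde\Delta$ whose resolvent is $\mathcal K$, as already indicated in the paragraph preceding the theorem.

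First I would fix $\alpha=2$ in the setting $\widehat{K}(\xi)=|\xi|^{-\alpha}$. Since the standing framework requires $0<\alpha<d$, this is precisely the source of the hypothesis $d\ge 3$. With $\alpha=2$, the convolution kernel $K(z)=C|z|^{-(d-2)}$ agrees, up to a multiplicative constant, with the free fundamental solution of $-\Delta$ on $\mathbb{R}^d$. Consequently $\mathcal K$ is a positive compact self-adjoint operator on $L^2(\Omega)$ which formally inverts $-\Delta$; we may regard it as the resolvent $(-\widetilde\Delta)^{-1}$ of a distinguished self-adjoint realization $-\widetilde\Delta$ of the Laplacian on $\Omega$ subject to certain non-local boundary conditions.

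Next I would record the spectral comparison. Writing $\{\nu_k\}$ for the eigenvalues of $-\Delta^{\mathcal D}$ and $\{\widetilde\nu_k\}$ for those of $-\widetilde\Delta$, the min-max principle gives $\widetilde\nu_k\le\nu_k$ for every $k$, since the Dirichlet form is the restriction of the form of $-\widetilde\Delta$ to $H^1_0(\Omega)$, the smallest admissible form domain. The positive eigenvalues of $\mathcal K$ are therefore $\lambda_k=1/\widetilde\nu_k$, so
$$
N(\nu,-\Delta^{\mathcal D})=\#\{k:\nu_k<\nu\}\le\#\{k:\widetilde\nu_k<\nu\}=\#\{k:\lambda_k>1/\nu\}=n(1/\nu).
$$

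Finally I would insert $\alpha=2$ and $\lambda=1/\nu$ directly into \eqref{n(s)}:
$$
n(1/\nu)\le(2\pi)^{-d}\,\nu^{d/2}\,|\Omega|\,|\mathbb S^{d-1}|\,\frac{d^{d/2}}{d\,(d-2)^{d/2}}=(2\pi)^{-d}\,\nu^{d/2}\,|\Omega|\,|\mathbb S^{d-1}|\,\frac{1}{d}\left(\frac{d}{d-2}\right)^{d/2},
$$
which is precisely \eqref{PL}. The \emph{main obstacle} is the variational comparison $\widetilde\nu_k\le\nu_k$, which hinges on identifying $\mathcal K$ with the resolvent of a genuine self-adjoint extension of $-\Delta$ on $L^2(\Omega)$ whose form domain contains $H^1_0(\Omega)$; modulo this observation, already recorded in the discussion immediately preceding the statement, the theorem reduces to a one-line substitution into \eqref{n(s)}.
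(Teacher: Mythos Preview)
Your proposal is correct and follows exactly the route the paper takes: the paper's argument, given in the paragraph immediately before the theorem, is precisely to set $\alpha=2$, invoke the comparison $N(\nu,-\Delta^{\mathcal D})\le N(\nu,-\widetilde\Delta)\le n(1/\nu)$ via the non-local realization $-\widetilde\Delta$, and then substitute $\alpha=2$, $\lambda=1/\nu$ into \eqref{n(s)}. You have also correctly identified both the reason for the hypothesis $d\ge 3$ and the one nontrivial point (the variational inequality $\widetilde\nu_k\le\nu_k$), which the paper likewise states without further justification beyond the reference to \cite{KS}.
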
 

\begin{rem}
The constant appearing in the right hand side in \eqref{PL} is not as good as in \eqref{N(lambda)}. It must be related to the fact that when considering the Dirichlet boundary problem the Green function for the Laplacian in the whole space has a negative compensating term that is responsible for the Dirichlet boundary conditions. The integral operator $\mathcal K$ with $\widehat K(\xi) = 1/|\xi|^2$ in $L^2(\Omega)$ is the inverse to the Laplacian with some more complicated non-local boundary conditions, see \cite{KS}. 

\end{rem}

\noindent
{\it Acknowledgements}.
AL was supported by the RSF grant No. 18-11-00032.

\bibliographystyle{plain}
\bibliography{ref}

\end{document}